\newcommand{\ncom}{\newcommand}
\ncom{\beqn}{\begin{eqnarray*}}
	\ncom{\eeqn}{\end{eqnarray*}}
\newtheorem{theorem}{Theorem}[section]
\newtheorem{lemma}[theorem]{Lemma}
\theoremstyle{definition}
\theoremstyle{definition}
\theoremstyle{remark}
\newtheorem{remark}[theorem]{Remark}
\newtheorem{example}[theorem]{Example}
\title{A Fourier extension based numerical integration scheme for fast and high-order approximation of convolutions with weakly singular kernels }
\author[A. Anand]{Akash Anand} 
\address{Akash Anand, Department of
	Mathematics and Statistics, Indian Institute of Technology, Kanpur, UP 208016}
\email{akasha@iitk.ac.in}
\author[A. K. Tiwari]{Awanish Kumar Tiwari} 
\address{Awanish Kumar Tiwari, Department of
	Mathematics and Statistics, Indian Institute of Technology, Kanpur, UP 208016}
\email{awanish@iitk.ac.in}
\begin{document}
\maketitle{\title}
\begin{abstract}
   Computationally efficient numerical methods for high-order approximations of convolution integrals involving weakly singular kernels find many practical applications including those in the development of fast quadrature methods for numerical solution of integral equations. Most fast techniques in this direction utilize uniform grid discretizations of the integral that facilitate the use of FFT for $O(n\log n)$ computations on a grid of size $n$. In general, however, the resulting error converges slowly with increasing $n$ when the integrand does not have a smooth periodic extension. Such extensions, in fact, are often discontinuous and, therefore, their approximations by truncated Fourier series suffer from Gibb's oscillations. In this paper, we present and analyze an $O(n\log n)$ scheme, based on a Fourier extension approach for removing such unwanted oscillations, that not only converges with high-order but is also relatively simple to implement. We include a theoretical error analysis as well as a wide variety of numerical experiments to demonstrate its efficacy.
\end{abstract}

\section{Introduction}
\label{sec:introduction}

In this paper, we consider the problem of approximating the integral operator $A : C([0,1]) \to C([0,1])$ given by
\begin{equation}\label{eq:-Ie}
(Au)(x) = \displaystyle \int_{0}^{1}g(x-y)u(y)dy
\end{equation}
with a weekly singular kernel $g$
using a numerical integration scheme of the form
\begin{align}
\label{gen_quad}
(A_n u)(x) = \sum_{j=0}^{n} w_j^n(x)u(x_j^n)
\end{align}
with quadrature points $x_j^n = j/n$ and weights $w_j^n(x)$ that depend on the kernel $g$. While our discussions in this text remain valid for general weakly singular kernels, for concreteness of convergence rates, we investigate $g$ either of the form $g(x) = |x|^{\gamma}, \gamma \in (-1,\infty)$ or of the form $g(x) = \log |x|$.
The primary aim of this effort is to develop a quadrature so that the approximations in \cref{gen_quad} ---
\begin{enumerate}[a.]
	\item
	converge with high-order, that is, satisfy
	\[
	\Vert A_n u - A u\Vert_{\infty} \le C(u)n^{-\mu(r)}, \ \ \ u \in C^{r}([0,1]),
	\]
	for some increasing and positive function $\mu : \mathbb{N} \to \mathbb{R}^+$, and
	\item
	can be evaluated at quadrature points efficiently, that is, the set $\{ (A_n u)(x_j^n) : j = 0, \ldots, n \}$ can be computed in $O(n\log n)$ operations.
\end{enumerate}
For example, such a quadrature can then be utilized toward obtaining an $O(n\log n)$ Nystr\"{o}m solver for numerical solution of integral equations of the form
\[
u(x) - \int_0^1 g(x-y)m(y)u(y)\,dy = f(x), \ \ x \in [0,1]
\]
for $f, m \in C^{r}([0,1])$.

While several numerical integration schemes for integrands that have a point-singularity within the domain of integration have been developed, this problem remains an active area of research. 
	In the special case when the integrand singularity is at one or both endpoints of the interval, an appropriate change of variable, as suggested in \cite{Kress}, can be employed for their analytical resolution were a subsequent use of trapezoidal rule yields a high-order numerical accuracy.
	%
The more general case where the singularity is an interior point of the interval requires more careful treatment. 
Among the available techniques, a prominent example is a high-order corrected trapezoidal rule due to Rokhlin \cite{Rokhlin1990}. While a rapid increase in the magnitude of correction weights with increasing order limits their original approach to low order convergence rates, Kapur and Rokhlin subsequently introduced another scheme (see \cite{Rokhlin1997}), that handles the singularity by separating the integrand into regular and singular parts along with allowing some quadrature nodes to lie outside the interval of integration. 
In this case, however, the linear system for determining the nodes and weights, 
especially near the interval endpoints, are poorly conditioned that worsen rapidly with increasing order. 

In a subsequent effort, Alpert \cite{Alpert} introduced a hybrid Gauss-Trapezoidal rule that is based on the periodic trapezoidal rule. 
In this scheme, he replaced the equispaced nodes near each end  with an optimal set of auxiliary nodes that are chosen using the algorithm of Kolm--Rokhlin\cite{Kolm}.
 Later, based on a combination of boundary and singularity correction, Aguilar and Chen proposed a corrected trapezoidal rule for logarithmic singularity in $\mathbb{R}^2$ \cite{Augilar2002} 
 and for the singularity of the form $\displaystyle 1/|x|$ in $\mathbb{R}^3$ \cite{Augilar2005}. 
 While they numerically demonstrated the effectiveness of this scheme in computing singular integrals, a corresponding convergence analysis is not readily available. 

More recently, Duan and Rokhlin \cite{Duon} introduced a class of quadrature formulae for handling singularities of the form $\log|x|$ in two dimensions and $1/|x|$ in three dimensions. 
	This approach is somewhat related to the Ewald summation \cite{Ewald} 
and leads to quadratures that can be viewed as a version of the corrected trapezoidal rule.  
To avoid solving ill-conditioned linear systems for obtaining the quadrature weights, the scheme uses very lengthy analytic expressions that depend on the underlying mesh.
A recent paper in this direction  by Marin \cite{Marin}, is again a corrected trapezoidal approach,  
where singular functions of the form $|x|^{\gamma}$,  $\gamma>-1$ in one dimension 
and $|x|^{-1}$ in two dimensions were considered for high-order numerical integration.

The scheme introduced in this paper utilizes a regular grid in order to employ the fast Fourier transform (FFT) to obtain convolution values in $O(n \log n)$ computational time. Indeed, as
\begin{align}\label{eq:-fa}
(Au)(x) = 
\sum_{k=-\infty}^{\infty} \widehat{g_e}(k)\widehat{u_e}(k) e^{2\pi i k x},
\end{align}
where 
$g_e$ and $u_e$ denotes the periodic extension of $g$ and $u$ respectively and 
for $1$-periodic function $v$, $\hat{v}$ denotes its Fourier coefficients given by
\[
\hat{v}(k) = \int_0^1 v(x)e^{-2\pi i k x}\,dx,
\]
the inverse FFT evaluates the truncated series 
\[
(A_nu)(x_j^n) = \sum_{k=-n/2}^{n/2-1} \widehat{g_e}(k)\widehat{u_e}(k) e^{2\pi i k x_j^n}
\]
at $x_j^n, j = 0, \ldots, n-1$ in a straightforward manner in $O(n \log n)$ operations. However, in general, the resuting error
\[
\Vert Au -A_nu \Vert_{\infty} = \sum_{|k| \ge n/2} |\widehat{g_e}(k)||\widehat{u_e}(k)|
\]
 converges slowly with increasing $n$ owing to discontinuous $u_e$ and singular $g_e$. In this paper, we suggest an approach to alleviate the difficulty of slow convergence by making $u_e$ more amenable to Fourier approximation that also allows us to pre-compute the Fourier coefficients for $g_e$ more accurately resulting in an overall rapidly convergent numerical integration scheme.

The organization of the paper is as follows. In \cref{sec:-fc}, we describe the construction of Fourier extension for smooth functions 
where the functional data is available on equispaced grids. Next, in \cref{sec:-IntSc}, we provide the proposed numerical integration scheme for the approximation of the convolution $A$. We then present a theoretical analysis of our integration scheme in \cref{sec:-qa}. In \cref{sec:-Bk}, we present the numerical computation of singular moments that occur in the numerical integration scheme. Further, as a special case, we discuss our numerical integration scheme for compactly supported function in \cref{sec:-csi}. A variety of numerical results to validate the accuracy of our quadrature implemented in this paper are presented in \cref{sec:-ne}.

\section{Fourier extension}\label{sec:-fc}

We consider the problem of approximation of a function $u \in C^{\infty}([0,1])$ by a truncated Fourier series where discrete functional data is available on an equispaced grid on the interval $[0,1]$. Recall that the Fourier series approximations fail to converge uniformly when $u(0)\ne u(1)$ due to
rapid oscillations near the boundary, known as the Gibb’s phenomenon \cite{Gibbs1,Gibbs2,Hewitt,Wilbraham}. Several approximation approaches have been proposed to overcome the difficulty of Gibb’s oscillations. These  include  schemes  that  utilize  Fourier  or  physical  space  filters  \cite{Shu5}  as  well  as  those  that  project  the partial  Fourier  sums  onto  suitable  functional  spaces.   For  example,  the  Gegenbauer  projection  technique \cite{Shu1,Shu2,Shu3,Shu4,Shu5, Shu6} utilizes a space spanned by Gegenbauer polynomials.  In Fourier-pad\'{e} approximations, partial Fourier sums are approximated by rational trigonometric functions \cite{Fornberg
,Geer,Banerjee}. Techniques based on extrapolation algorithms \cite{Brezinski} have also been used.  Several Fourier extension ideas have also been proposed that seek to find a trigonometric polynomial of the form
\[ \sum_{k=-n}^{n-1} \widehat{u_{c,e}}(k) e^{2\pi i k x/b}\] with $b\ge 1$, where $u_{c,e}$ is the $b$--periodic extension of $u_c$, the continuation of $u$ on $[0,1]$ to $u_c$ on $[0,b]$ or $[1-b,1]$ in such a way that $u_c\equiv u$ on $[0,1]$ and $u^{(\ell)}(0) =f^{(\ell)}(b)$ for all integers $0\le \ell \le r,$ for some $r >0$.  Once such an $u_c$ has been produced, the restriction of its truncated Fourier series to $[0,1]$ serves as an approximation to $u$.  For some examples where Fourier extension ideas have been used and discussed in various contexts, see \cite{Israeli,Boyd,Garbey,Dervout,Huybrechs, Nieslony}.

In the present context,  
 a grid of size $n+1$ has its $j$th grid point at $x_j = j/n,$ where the corresponding function value $u(x_j)$ is assumed to be known and are denoted by $u_j$ for $j = 0, \ldots, n$.
The function $u$ is continued to the interval $[-1,1]$ as
\begin{equation}
\label{eq:fc_discrete}
u_c(x) = 
\begin{cases}
u(x), & x \in [0,1] \\
p(U)(x), & x \in [-1,0),
\end{cases}
\end{equation}
where, for a $2 \times (r+1)$ matrix $U = (u_{ij})_{0 \le i \le 1, 0 \le j \le r}$, $p(U)$ is a polynomial of degree $2r+1$ of the form
\begin{align}
\label{poly}
p(U)(x) &= \sum_{m=0}^r u_{0m}\ p_m^0(x) + \sum_{m=0}^r u_{1m}\ p_m^1(x)
\end{align}
with
\[
p_m^0(x) = \frac{1}{m!} x^{m}(1+x)^{r+1}  \sum_{n = 0}^{r-m} (-x)^{n} \binom{r+n}{n}
\]
and
\[
p_m^1(x) = \frac{1}{m!}  (1+x)^{m}(-x)^{r+1}  \sum_{n = 0}^{r-m} (1+x)^{n} \binom{r+n}{n}.
\]
It is easy to check that, for $0 \le m \le r$, the $m$-th derivative of the polynomial $p(U)$ satisfies
\begin{align*}
p(U)^{(m)}(0) = u_{0m}, \ \ \ p(U)^{(m)}(-1) = u_{1m}.
\end{align*}
Thus, choosing the entries of $U$ as
\[
u_{0m} = u^{(m)}(0) \ \ \text{ and }\ \ u_{1m} = u^{(m)}(1)
\]
ensures that $u_c \in C^r([-1,1])$. The continued discrete data $(u_{c})_j = u_c(j/n), j = -n, \ldots, n-1$ is then used to obtain
\begin{align} \label{fc_coeffs_e}
\hat{u}_{c,n}(k) = 
\frac{1}{2n}\sum_{j=-n}^{n-1} \left(u_c\right)_{j} e^{-\pi i j k /n}
\end{align}
for $k = -n,\ldots, n-1$,
to arrive at the interpolating Fourier extension approximation for the discrete problem given by
\begin{align} \label{tfse}
u_{c,n}(x) = \sum_{k=-n}^{n-1} \hat{u}_{c,n}(k) e^{\pi i k x}.
\end{align}

However, in general, the derivative data $u^{(m)}(0)$ and $u^{(m)}(1)$ may not be available exactly and their approximations need to be obtained using the discrete data $u_j$. In such a scenario, we replace $U$ by its approximation $U^q$ with entries obtained as $u_{00}^q = u(x_0)$, $u_{10}^q = u(x_n)$, and for $1 \le m \le r$, we set
\[
u^q_{0m} = \mathcal{D}_{n,q}^{m,+}(u)(x_0) \text{ and } u^q_{1m} = \mathcal{D}_{n,q}^{m,-}(u)(x_n),
\]
using forward and backward finite difference derivative operators $\mathcal{D}_{n,q}^{m,+}(u)$ and $\mathcal{D}_{n,q}^{m,-}(u)$ respectively of the order of accuracy $q$ as approximations of $u^{(m)}$ whose generic form reads
\[
D_{n,q}^{m,\pm}(u)(x_\ell) = (\pm n)^m\left( \sum_{k=0}^{m+q-1} \left(a_{q}^m\right)_k u_{\ell\pm k} \right)
\]
for appropriately chosen constants $\left(a_{q}^m\right)_k$. The continuation of $u$ corresponding to the boundary data matrix $U^q$ is denoted by $u_c^q$.
The continued data $(u_{c}^q)_j = u_c^q(j/n), j = -n, \ldots, n-1$ is then used to similarly obtain
\begin{align} \label{fc_coeffs_a}
\hat{u}_{c,n}^q(k) = 
\frac{1}{2n}\sum_{j=-n}^{n-1} \left(u_c^q\right)_{j} e^{-\pi i j k /n}, 
\end{align}
for $k = -n,\ldots, n-1$,
and the corresponding Fourier extension approximation given by
\begin{align} \label{tfsa}
u_{c,n}^q(x) = \sum_{k=-n}^{n-1} \hat{u}_{c,n}^q(k) e^{\pi i k x}.
\end{align}
Note that the coefficients $\hat{u}_{c,n}^q(k)$ can be computed in $\mathcal{O}(n\log n)$ computational time using the FFT.
We use the approximation $u_{c,n}^q$ of $u$ to construct a numerical integration scheme $A_n$, that we discuss next.


\section{The Integration scheme}\label{sec:-IntSc}

We begin by observing that
\[
(Au)(x) = \int_0^1 g(x-y)u(y)\,dy = \int_0^1 g(x-y)u_c(y)\,dy
\]
where $u_c$ is the continuation of $u$ to $[-1,1]$ as described in the previous section. We rewrite $Au$ as
\begin{align*}
          (Au)(x) 
               & = \int_{x-1}^{x+1} g(x-y)u_c(y)dy - (C_LU)(x) - (C_RU)(x)
\end{align*}                               
where 
$C_L$ and $C_R$ are given by
\begin{align*}
(C_LU)(x) &= \int_{x-1}^{0} g(x-y)p(U)(y)dy, \\
(C_RU)(x) &= \int_{1}^{x+1} g(x-y)p(U)(y-2)dy,
\end{align*}

The Fourier series for the $2$-periodic extension $u_{c,e}$ of $u_c$ can then be utilized to write the integral as
\begin{align*}
 (Au)(x) 
          &= \sum_{k=-\infty}^{\infty} \beta(k) \widehat{u_{c,e}}(k)  e^{\pi ikx} -(C_LU)(x)-(C_RU)(x),
\end{align*}
where 
\[
\widehat{u_{c,e}}(k) = \frac{1}{2} \int_{x-1}^{x+1} u_c(y) e^{-\pi iky}\,dy,
\]
and
\begin{align}
\label{eq:beta}
\beta(k) 
= \int_{-1}^{1} g(\rho)  e^{\pi ik\rho}\,d\rho.
\end{align}
In view of this, we take the numerical integration scheme $A_n$ to be of the form
\begin{align}
\label{eq:conv}
(A_nu)(x) = \sum_{k=-n}^{n-1} \beta(k) \hat{u}_{c,n}^q(k)  e^{ \pi ikx} -(C_LU^q)(x)-(C_RU^q)(x)
\end{align}
which, further, can we rewritten in the form of a quadrature on the equispaced grid given by
\begin{align}
(A_nu)(x) 
&= \sum_{j=0}^{n-1} w_j^n(x) u_{j}  + (C_nU^q)(x)
\label{eq:quad}
\end{align}
with the correction term
\begin{align*}
(C_nU^q)(x) = \sum_{j=-n}^{-1} w_j^np(U^q)(j/n) 
- (C_LU^q)(x) - (C_RU^q)(x)
\end{align*}
and quadrature weights
\begin{align*}
w_j^n(x) = \frac{1}{2n}\sum_{k=-n}^{n-1} \beta(k)e^{\pi i k (x-j/n)}.
\end{align*}
Note that while \cref{eq:conv} provides an $O(n\log n)$ scheme for obtaining the convolution on an equispaced grid on size $n$, one could use \cref{eq:quad} with the pre-computed weights and the correction term for a single point integration. Use of either form requires the values $\beta(k)$ that, in principle, can be obtained exactly through the symbolic integration of \cref{eq:beta}. In practice, however, it might be more convenient to numerically pre-compute them to high precision and store them for later calculations. We discuss a couple of numerical strategies toward this in \cref{sec:-Bk}.

The functions $(C_LU)(x)$ and $(C_R)(x)$, the left and right corrections in \cref{eq:conv}, can be obtained from analytical expressions that can be easily derived using the form of continuation polynomial. For example, if $g(x) = |x|^\gamma$, then 
\begin{align*}
&(C_LU)(x) =  \int_{x-1}^{0} g(x-y)p(U)(y)\,dy = \\
     & \sum_{m=0}^r \frac{u_{0m}}{m!} \sum_{n=0}^{r-m} (-1)^n \binom{r+n}{n} \sum_{q=0}^{r+1} \binom{r+1}{q} \sum_{k=0}^{m+n+q} (-1)^k \binom{m+n+q}{k} x^{m+n+q-k} \left(\frac{1-x^{k+1+\gamma}}{k+1+\gamma}\right)+\\ 
     &(-1)^{r+1} \sum_{m=0}^r \frac{u_{1m}}{m!} \sum_{n=0}^{r-m} \binom{r+n}{n} \sum_{q=0}^{m+n} \binom{m+n}{q} \sum_{k=0}^{r+q+1} (-1)^k \binom{r+q+1}{k} x^{r+q+1-k} \left(\frac{1-x^{k+1+\gamma}}{k+1+\gamma}\right),      
\end{align*}
and
\begin{align*}
&(C_RU)(x) =  \int_{1}^{x+1} g(x-y)p(U)(y-2)\,dy = \\
     &\sum_{m=0}^r \frac{u_{0m}}{m!} \sum_{n=0}^{r-m} (-1)^n \binom{r+n}{n} \sum_{q=0}^{r+1} \binom{r+1}{q} \sum_{k=0}^{m+n+q} \binom{m+n+q}{k} (x-2)^{m+n+q-k} \left(\frac{1-(1-x)^{k+1+\gamma}}{k+1+\gamma}\right) +\\ 
     & (-1)^{r+1} \sum_{m=0}^r \frac{u_{1m}}{m!} \sum_{n=0}^{r-m} \binom{r+n}{n} \sum_{q=0}^{m+n} \binom{m+n}{q} \sum_{k=0}^{r+q+1} \binom{r+q+1}{k} (x-2)^{r+q+1-k} \left(\frac{1-(1-x)^{k+1+\gamma}}{k+1+\gamma}\right).        
\end{align*}
Similar expressions can also be obtained in a straightforward manner for $g(x) = \log(|x|)$. In the next section, we investigate the errors that are associated with the proposed numerical scheme.

 \section{Error Analysis} \label{sec:-qa}
\label{sec:Quadrature Analysis}


Clearly, the error in the numerical scheme is given by
\begin{align} \nonumber
(Au)(x) - (A_nu)(x) &= 
\sum_{k=-n}^{n-1} \beta(k) \left(\widehat{u_{c,e}}(k) - \hat{u}_{c,n}^q(k)\right)  e^{i \pi kx} + \sum_{\substack{k\ge n \\ k < -n}} \beta(k) \widehat{u_{c,e}}(k)  e^{\pi ikx} \\ \nonumber
&- (C_LU)(x) - (C_RU)(x) + (C_LU^q)(x) + (C_RU^q)(x) \\ 
&= \sum_{k=-n}^{n-1} \beta(k) \left(\widehat{u_{c,e}}(k) - \hat{u}_{c,n}(k)\right)e^{\pi ikx}   + \sum_{k=-n}^{n-1} \beta(k) \left(\hat{u}_{c,n}(k) - \hat{u}_{c,n}^q(k)\right)e^{\pi ikx}  \label{eq:na} \\ 
& + \sum_{\substack{k \ge n \\ k < -n}} \beta(k) \widehat{u_{c,e}}(k)e^{\pi ikx} - (C_L(U-U^q))(x) - (C_R(U-U^q))(x) \nonumber
\end{align}
To estimate the magnitude of $$ \sum_{k=-n}^{n-1} \beta(k) \left(\widehat{u_{c,e}}(k) - \hat{u}_{c,n}(k)\right)e^{\pi ikx} + \sum_{\substack{k \ge n \\ k < -n}} \beta(k) \widehat{u_{c,e}}(k)e^{\pi ikx},$$ we recall that $\beta(k) = \beta(-k)$ and
\begin{align*}
\hat{u}_{c,n}(k) - \widehat{u_{c,e}}(k) &=  \frac{1}{2n}\sum_{j=-n}^{n-1} (u_c)_{j} e^{-\pi i j k /n} - \widehat{u_{c,e}}(k)
 = \sum_{\ell=1}^{\infty} \left(\widehat{u_{c,e}}(k+2\ell n) + \widehat{u_{c,e}}(k-2\ell n)\right)
\end{align*}
to arrive at the upper bound
\begin{align}
&\sum_{k=-n}^{n-1} |\beta(k)| \sum_{\substack{\ell=1}}^{\infty} |\widehat{u_{c,e}}(k+2\ell n) + \widehat{u_{c,e}}(-k-2\ell n)| + \sum_{|k| \ge n}|\beta(k)||\widehat{u_{c,e}}(k)+\widehat{u_{c,e}}(-k)| \label{eq:err1}.
\end{align}
We also observe that, for $x \in [0,1]$, we have
\begin{align*}
(C_L(U-U^q))(x) + (C_R(U-U^q))(x) = \sum_{\ell=-\infty}^\infty \beta(k)\widehat{(u_{c,e}-u_{c,e}^q)}(k)e^{\pi i k x},
\end{align*}
and, therefore, we get
\begin{align*}
&(C_L(U-U^q))(x) + (C_R(U-U^q))(x)-\sum_{k=-n}^{n-1}\beta(k)\left( \hat{u}_{c,n}(k) - \hat{u}_{c,n}^q(k)\right) = \\
&\sum_{k=-\infty}^\infty \beta(k)\widehat{(u_{c,e}-u_{c,e}^q)}(k)e^{\pi i k x} - \frac{1}{2n} \sum_{k=-n}^{n-1} \beta(k)  \sum_{j=-n}^{n-1} \left((u_c)_j-(u_c^q)_{j}\right) e^{-\pi i j k /n} = \\
&\sum_{k=-\infty}^\infty \beta(k)\widehat{(u_{c,e}-u_{c,e}^q)}(k)e^{\pi i k x} - \sum_{k=-n}^{n-1} \beta(k) \sum_{\ell=-\infty}^{\infty}   \widehat{(u_{c,e}-u_{c,e}^q)}(k+2\ell n) = \\
&-\sum_{k=-n}^{n-1}\beta(k)\sum_{\substack{\ell=-\infty \\ \ell \ne 0}}^{\infty}    \widehat{(u_{c,e}-u_{c,e}^q)}(k+2\ell n) + \sum_{\substack{k \ge n \\ k < -n}} \beta(k)\widehat{(u_{c,e}-u_{c,e}^q)}(k)e^{\pi i k x}.
\end{align*}
Thus, its magnitude can be bounded above by
\begin{align}
\sum_{k=-n}^{n-1}|\beta(k)|\sum_{\ell=1}^{\infty}  |\widehat{(u_{c,e}-u_{c,e}^q)}(k+2\ell n) + \widehat{(u_{c,e}-u_{c,e}^q)}(-k-2\ell n)| + \sum_{|k| \ge n} |\beta(k)||\widehat{(u_{c,e}-u_{c,e}^q)}(k)|.
 \label{eq:err2}
\end{align}
The convergence rate, therefore, depends not only on the rates at which Fourier coefficients of $u_{e,c}$ and $u_{c,e}-u_{c,e}^q$ decay but also on the decay rate of $\beta(k)$. 

\begin{figure}[b]
	\centering
	\begin{subfigure}[$g(x) = |x|^{-4/5}$]
		{\includegraphics[width=0.3\textwidth,trim={1.5cm 0.8cm 1.4cm 0.8cm},clip]{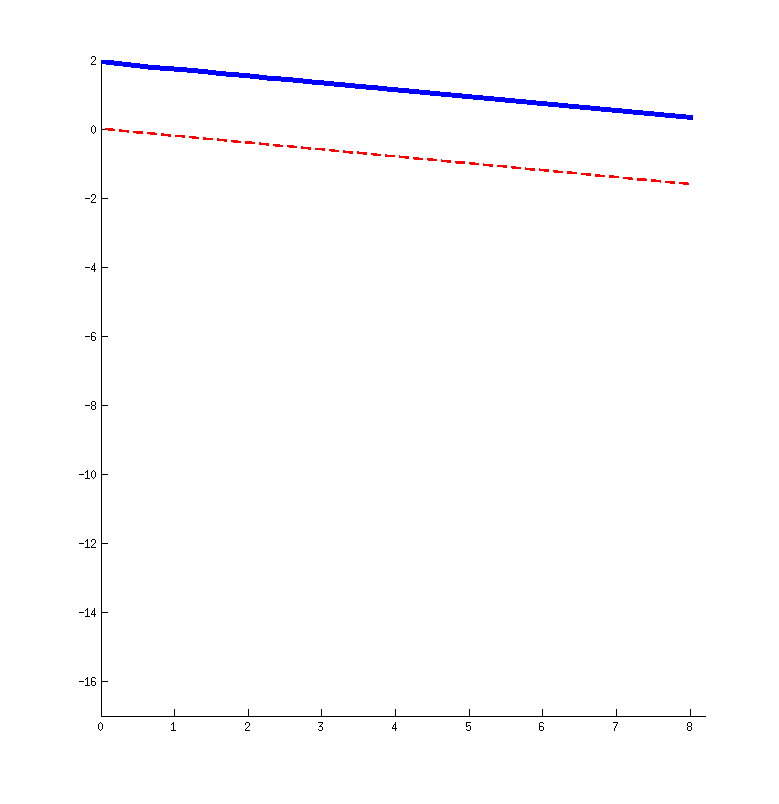}}
	\end{subfigure}
	~
	\begin{subfigure}[$g(x) = |x|^{-1/2}$]
		{\includegraphics[width=0.3\textwidth,trim={1.5cm 0.8cm 1.4cm 0.8cm},clip]{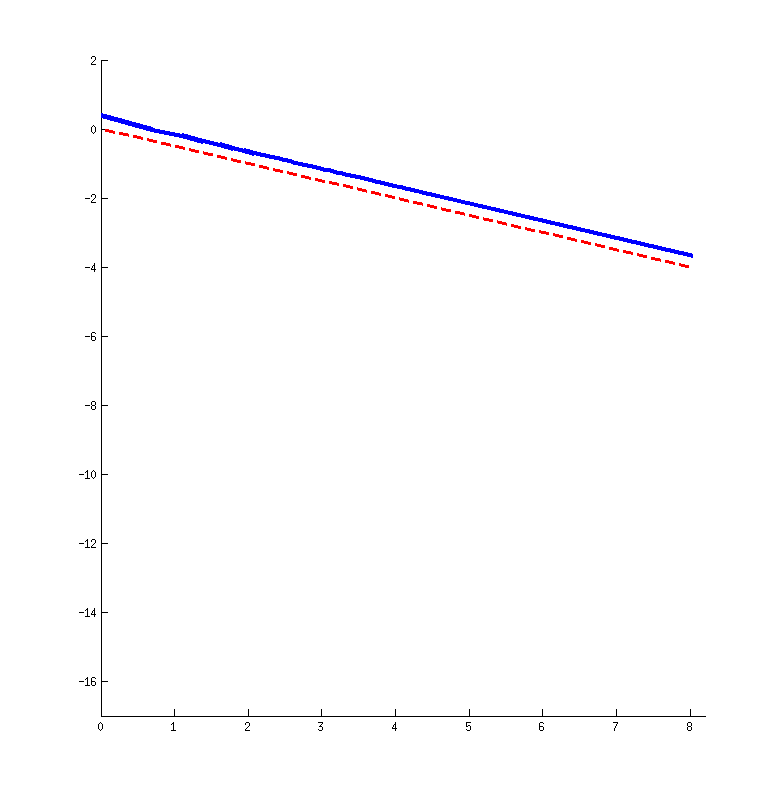}}
	\end{subfigure}
	~
	\begin{subfigure}[$g(x) = |x|^{1/2}$]
		{\includegraphics[width=0.3\textwidth,trim={1.5cm 0.8cm 1.4cm 0.8cm},clip]{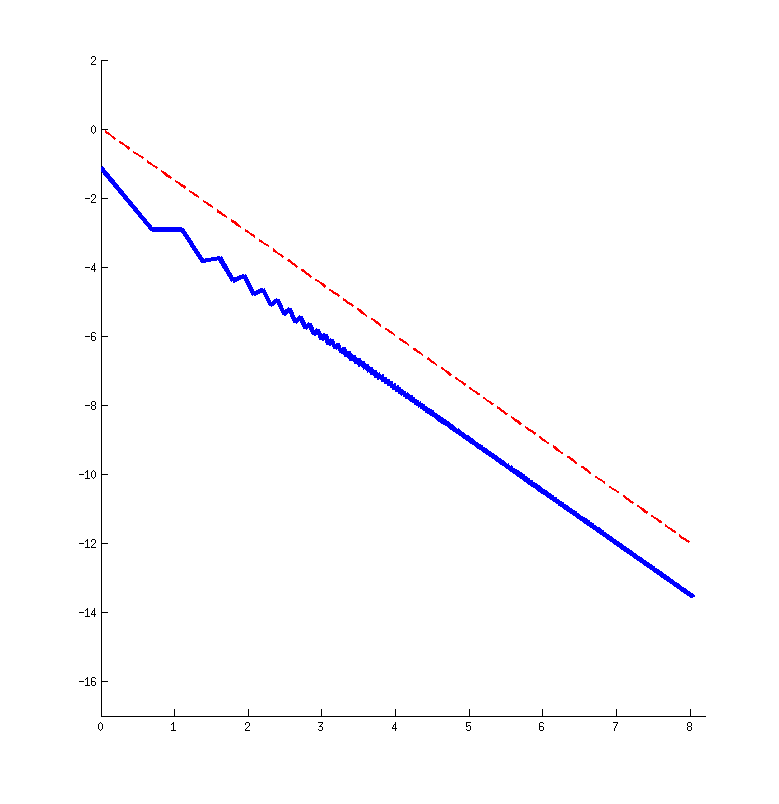}}
	\end{subfigure}
	~
	\begin{subfigure}[$g(x) = |x|^{4/5}$]
		{\includegraphics[width=0.3\textwidth,trim={1.5cm 0.8cm 1.4cm 0.8cm},clip]{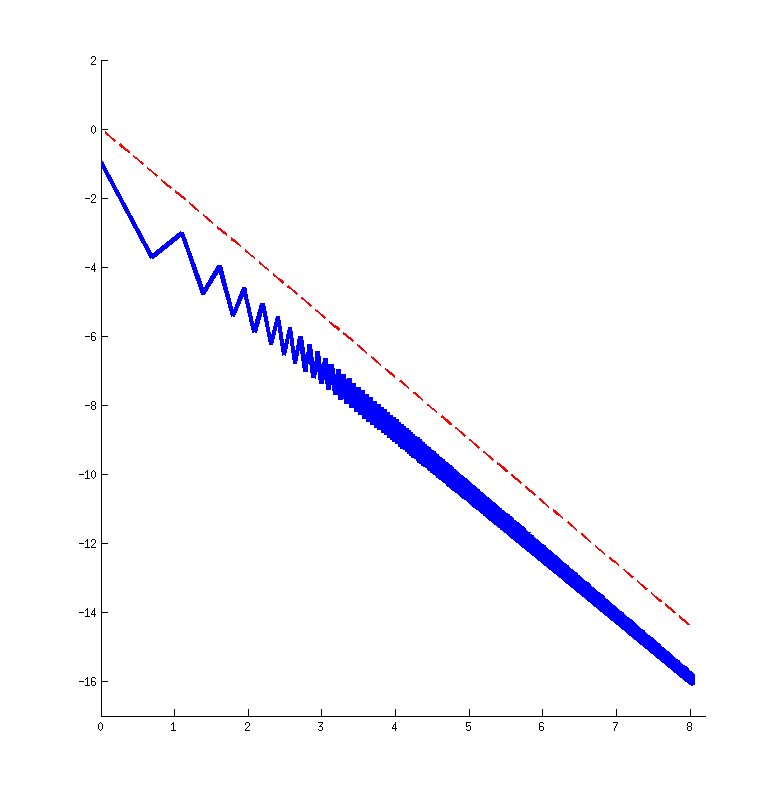}}
	\end{subfigure}
	~
	\begin{subfigure}[$g(x) = |x|^{3}$]
		{\includegraphics[width=0.3\textwidth,trim={1.5cm 0.8cm 1.4cm 0.8cm},clip]{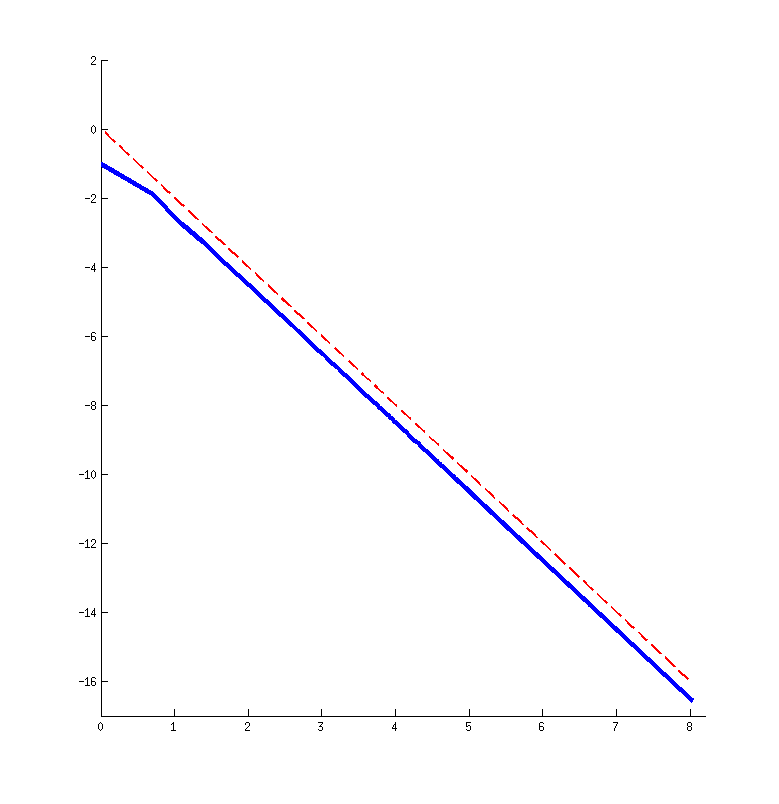}}
	\end{subfigure}
	~
	\begin{subfigure}[$g(x) = \log |x|$]
		{\includegraphics[width=0.3\textwidth,trim={1.5cm 0.8cm 1.4cm 0.8cm},clip]{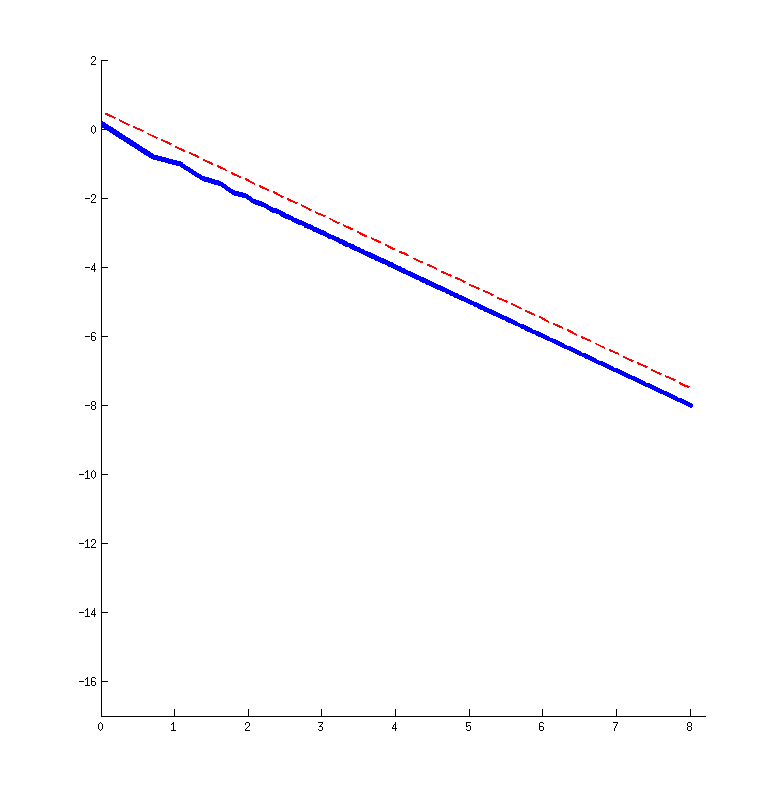}}
	\end{subfigure}
	\caption{ Plots showing $\log|\beta(k)|$ against $\log(k)$ for kernels of the form $g(x) = |x|^{\gamma}$ for different $\gamma$ values as well as $g(x) = \log |x|$; for each subfigure, a dashed line with slope as predicted by \Cref{lm:-BK} is shown for reference. Recall that for the bottom right figure, expected slope is $-1$ whereas it is $-\min\{1+\gamma,2\}$ for figures with $g(x) = |x|^{\gamma}$.  
	}
	\label{fig:-betakwol}
\end{figure}

\begin{lemma} \label{lm:-BK}
	For a $\gamma \in (-1,\infty)$ and $g(x) = |x|^\gamma$, there exists a positive constant $B_\gamma$ such that 
	\[
	\left|\beta(k)\right| \le B_\gamma |k|^{-\min\{1+\gamma,2\}}
	\]
	for all $k \ne 0$. 
	Similarly, for $g(x) = \log|x|$, we have
	\[
	|\beta(k) | \le  2 |k|^{-1}
	\]
	for all $k \ne 0$.
\end{lemma}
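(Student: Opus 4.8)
The plan is to estimate $\beta(k) = \int_{-1}^1 g(\rho)\,e^{\pi i k \rho}\,d\rho$ directly via integration by parts, exploiting the parity of $g$ and the decay that each integration by parts produces in $k$. Since $g$ is even, one first writes $\beta(k) = 2\int_0^1 g(\rho)\cos(\pi k \rho)\,d\rho$, so the task reduces to controlling a one-sided cosine-moment of a function with an integrable algebraic (or logarithmic) singularity at the origin.

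For $g(\rho) = \rho^\gamma$ on $(0,1]$, I would split according to whether $\gamma \ge 1$ or $-1 < \gamma < 1$. When $\gamma \ge 1$, the function $g$ is $C^1$ on $[0,1]$ with $g(0)=0$ (for $\gamma>0$), so integrating by parts twice produces a factor $k^{-2}$ together with boundary terms at $\rho=1$ and a remaining integral $\int_0^1 \rho^{\gamma-2}(\cdots)\,d\rho$ which is absolutely convergent precisely because $\gamma - 2 > -1$; this yields the bound $B_\gamma |k|^{-2} = B_\gamma |k|^{-\min\{1+\gamma,2\}}$. When $-1 < \gamma < 1$, only one integration by parts is safe: it gives a $k^{-1}$ prefactor, a boundary term at $\rho = 1$ of size $O(k^{-1})$, and the integral $\frac{\gamma}{\pi k}\int_0^1 \rho^{\gamma-1}\sin(\pi k\rho)\,d\rho$; rescaling $t = \pi k \rho$ turns the latter into $\frac{\gamma}{\pi k}(\pi k)^{-\gamma}\int_0^{\pi k} t^{\gamma-1}\sin t\,dt$, and since $\gamma - 1 \in (-1,0)$ the improper integral $\int_0^\infty t^{\gamma-1}\sin t\,dt$ converges, so this term is $O(k^{-1-\gamma})$. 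Combining the two regimes gives $|\beta(k)| \le B_\gamma |k|^{-\min\{1+\gamma,2\}}$. For the logarithmic kernel, one integration by parts on $2\int_0^1 \log\rho\,\cos(\pi k\rho)\,d\rho$ gives $\beta(k) = -\frac{2}{\pi k}\int_0^1 \frac{\sin(\pi k \rho)}{\rho}\,d\rho$ (the boundary term at $\rho=1$ vanishes since $\log 1 = 0$, and the contribution at $\rho = 0$ vanishes because $\rho\log\rho \to 0$); then the substitution $t = \pi k\rho$ identifies the remaining integral with $\int_0^{\pi k}\frac{\sin t}{t}\,dt$, whose absolute value is bounded by $\int_0^\infty \frac{\sin t}{t}\,dt = \frac{\pi}{2}$ uniformly in $k$ — in fact by slightly more care one can use that the partial sine integral is bounded by $\pi/2$ in absolute value for all upper limits — delivering $|\beta(k)| \le 2|k|^{-1}$ after tracking the constants, and I would check that the rough constant-counting indeed gives the clean bound $2$ (or tighten the argument so that it does).

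The main obstacle is the range $-1 < \gamma < 1$, where naive integration by parts is blocked by the non-integrability of $\rho^{\gamma-2}$ at the origin, so the decay cannot simply be read off from smoothness. The key device is the rescaling $t = \pi k\rho$, which trades the singularity for an oscillatory-integral tail $\int_0^\infty t^{\gamma-1}\sin t\,dt$ (convergent by Dirichlet's test since $t^{\gamma-1}$ is monotone to $0$); one must verify this convergence and that the truncated integral $\int_0^{\pi k}$ is bounded uniformly in $k$, which is where the exponent $-(1+\gamma)$ — rather than $-2$ — appears and explains the $\min$ in the statement. A secondary, more bookkeeping-type difficulty is keeping the constant in the logarithmic case equal to $2$; this should follow from the uniform bound $\bigl|\int_0^{A}\frac{\sin t}{t}\,dt\bigr| \le \frac{\pi}{2}$ valid for every $A > 0$, but it is worth stating that bound explicitly and invoking it rather than estimating crudely.
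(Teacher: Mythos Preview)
Your overall strategy---reduce to $2\int_0^1 g(\rho)\cos(\pi k\rho)\,d\rho$, integrate by parts once or twice, and rescale to extract a uniformly bounded oscillatory tail---is correct and is essentially what the paper does. The paper treats $\gamma\in(-1,0)$, $\gamma=0$, $\gamma\in(0,1]$, and $\gamma>1$ separately; for $\gamma\in(-1,0)$ it rescales directly (no integration by parts) and bounds $\int_0^k t^\gamma\cos(\pi t)\,dt$ by pairing successive periods, while for $\gamma\in(0,1]$ it integrates by parts once and rescales, exactly as you propose. Your unified treatment of $(-1,1)$ via a single integration by parts is a legitimate variant.

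Two concrete repairs are needed, though. First, for $\gamma\in(0,1)$ your argument as written only gives $O(k^{-1})$: you retain a boundary term at $\rho=1$ ``of size $O(k^{-1})$'', and since $1+\gamma>1$ here that term would dominate and spoil the stated $k^{-(1+\gamma)}$ bound. The fix is immediate---$k$ is an integer, so $\sin(\pi k)=0$ and the boundary term vanishes identically---but you must say this (the paper's identity $\beta(k)=-\frac{2\gamma}{\pi k^{1+\gamma}}\int_0^k t^{\gamma-1}\sin\pi t\,dt$ uses it implicitly). Relatedly, your parenthetical ``$\gamma-1\in(-1,0)$'' only covers $\gamma\in(0,1)$; for $\gamma\in(-1,0]$ one has $\gamma-1\in(-2,-1]$, but $\int_0^\infty t^{\gamma-1}\sin t\,dt$ still converges because $t^{\gamma-1}\sin t=O(t^\gamma)$ near $0$ and $t^{\gamma-1}\downarrow 0$ at infinity.

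Second, in the logarithmic case your uniform bound $\bigl|\int_0^A\tfrac{\sin t}{t}\,dt\bigr|\le\pi/2$ is false: the sine integral attains its maximum at $A=\pi$, and $\mathrm{Si}(\pi)\approx 1.852>\pi/2$. What is true, and sufficient, is $0<\mathrm{Si}(A)\le\mathrm{Si}(\pi)\le\int_0^\pi 1\,dt=\pi$ (using $\sin t/t\le 1$), which yields $|\beta(k)|=\frac{2}{\pi|k|}\,|\mathrm{Si}(\pi k)|\le\frac{2}{|k|}$ as claimed. The paper sidesteps this constant-tracking by instead rescaling to $\frac{2}{k}\int_0^k \log t\,\cos\pi t\,dt$ and showing that integral is bounded by $1$ via explicit period-pairing.
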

\begin{proof} We begin by proving the result for $g(x) = |x|^{-\gamma}, \gamma \in (-1,\infty).$ Note that we only need to show that the result holds for $k > 0$ as $\beta(-k) = \overline{\beta(k)}$.
We first assume that $\gamma \in (-1,0)$, $k > 0$, and observe that
\begin{align}\label{eq:nbk}
\beta(k) &= 
\frac{2}{k^{1+\gamma}} \vartheta(k),
\end{align}
where
\begin{align*}
\vartheta(k) = \int_{0}^{k} t^{\gamma} \cos{\pi t}\,dt
&= \sum_{\ell=0}^{ \lfloor k/2 \rfloor - 1}  \int_{0}^{1} \left[(t+2\ell)^{\gamma} - (t+2\ell+1)^{\gamma}\right] \cos{\pi t}\,dt + \int_{2\lfloor k/2\rfloor}^{k} t^{\gamma}\cos{\pi t}\,dt.
\end{align*}
The result now follows from \cref{eq:nbk} and the estimate
\begin{align*}
|\vartheta(k)| &\le \frac{3-2^{1+\gamma}}{1+\gamma}  + \sum_{\ell=1}^{ \lfloor k/2 \rfloor - 1} \left\{ (2\ell)^{\gamma} - (2+2\ell)^{\gamma} \right\} 
\le \frac{3-2^{1+\gamma}}{1+\gamma}  + 2^{\gamma}.
\end{align*}

	For $\gamma = 0$, the result follows trivially from the definition, as $\beta(k) = \int_{-1}^1 e^{\pi i k \rho}\,d\rho = 0$
for all $k \ne 0$. 

Now, for $\gamma \in (0,1], k>0$, we have
\begin{align}
\beta(k) 
= -\frac{2\gamma}{\pi k^{1+\gamma}}\int_0^k t^{\gamma-1}\sin \pi t\,dt.
\label{eq:nnbk}
\end{align}
An argument analogous to the one used above shows that the integral in \cref{eq:nnbk} is bounded by a constant that depends only on $\gamma$, thus, establishing the result for $\gamma \in (0,1]$.

Finally, for $\gamma > 1$, the result follows from
\[
\beta(k) 
= (-1)^k\frac{2\gamma}{\pi^2 k^2} +\frac{2\gamma(\gamma-1)}{\pi^2 k^2}\int_0^1 \rho^{\gamma-2}\cos \pi k \rho\,d\rho.
\]
Now, for $g(x) = \log |x|, k>0$, we observe that
 \begin{align}\label{eq:nbkl}
\beta(k) 
&= \frac{2}{k} \zeta(k),
\end{align}
where
\begin{align*}
\zeta(k) &= \int_{0}^{k} \log t \cos{\pi t}\,dt.
\end{align*}
Now, from the following estimate, result follows,
\begin{align*}
|\zeta(k)|
&\le 1 + \sum_{\ell=1}^{ k-\lfloor k/2 \rfloor - 1}  \int_{0}^{1} \log \left(\frac{t+2\ell-(k-2\lfloor k/2\rfloor)+1}{t+2\ell-(k-2\lfloor k/2\rfloor)}\right) \,dt \\
&\le 1 + \sum_{\ell=1}^{ \infty}  \log \left(\frac{2+2\ell-(k-2\lfloor k/2\rfloor)}{2\ell-(k-2\lfloor k/2\rfloor)}\right) 
 = 1 - \log \left(2-(k-2\lfloor k/2\rfloor)\right) \le 1.
\end{align*}
\end{proof}
We present some examples in \Cref{fig:-betakwol} to demonstrate that the behavior of $\beta(k)$ established in \Cref{lm:-BK} is indeed observed in practice. 

Next results look at the behavior of how Fourier coefficients decay for the difference between two different extensions of a given function.
\begin{lemma} \label{lm:-disc_err}
Let $f_1$ and $f_2$ be two extensions of $f : [0,1] \to \mathbb{R}$ that correspond to matrices $F_1 = (f^1_{jm})_{0 \le j \le 1, 0 \le m \le r}$ and $F_2 = (f^2_{jm})_{0 \le j \le 1, 0 \le m \le r}$ respectively, that is,
\[
f_1(x) = \begin{cases} f(x), & x \in [0,1], \\ p(F_1)(x), & x \in [-1,0),\end{cases}
\ \text{ and }\ \ f_2(x) = \begin{cases} f(x), & x \in [0,1], \\ p(F_2)(x), & x \in [-1,0).\end{cases}
\]
If $F_1$ and $F_2$ are such that their entries satisfy $f^1_{jm} = f^2_{jm} = f^{(m)}(j), j = 0,1$ and $m = 0, \ldots, s$ for some $0 \le s \le r$, then there exists a positive constant $C$ such that
\begin{align} 
\label{eq:disc_err1}
|\widehat{(f_1 - f_2)}(0)| &\le C  \Vert F_1 - F_2 \Vert_{max} \\
\label{eq:disc_err2}
|\widehat{(f_1 - f_2)}(k)| &\le C  |k|^{-(2+s)} \Vert F_1 - F_2 \Vert_{max}
\end{align}
for all $k \ne 0$, where for a matrix $F =(f_{jm})$, $\Vert F \Vert_{max} = \max_{jm} |f_{jm}|$.
\end{lemma}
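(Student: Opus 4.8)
The key observation is that $f_1 - f_2$ is supported on $[-1,0]$ and equals $p(F_1) - p(F_2) = p(F_1 - F_2)$ there, by linearity of $U \mapsto p(U)$ in its matrix argument. Write $\Delta = F_1 - F_2$ with entries $\delta_{jm}$; by hypothesis $\delta_{jm} = 0$ for $0 \le m \le s$, so $p(\Delta)(x) = \sum_{j=0}^1 \sum_{m=s+1}^r \delta_{jm}\, p_m^j(x)$. Thus $\widehat{(f_1-f_2)}(k) = \tfrac12 \int_{-1}^0 p(\Delta)(x)\, e^{-\pi i k x}\, dx$, and the whole problem reduces to estimating the Fourier coefficients of a single fixed polynomial $p(\Delta)$ on $[-1,0]$, which is a compactly supported (on that interval) polynomial whose coefficients are bounded by $\|\Delta\|_{max}$ times constants depending only on $r$.

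First I would dispatch \cref{eq:disc_err1}: for $k = 0$ we simply have $|\widehat{(f_1-f_2)}(0)| = \tfrac12 |\int_{-1}^0 p(\Delta)(x)\,dx| \le \tfrac12 \sum_{j,m} |\delta_{jm}| \int_{-1}^0 |p_m^j(x)|\,dx \le C \|\Delta\|_{max}$, absorbing the (finitely many, $r$-dependent) integrals into $C$. For \cref{eq:disc_err2} with $k \ne 0$, the plan is repeated integration by parts. The crucial structural fact, already recorded in the excerpt, is that $(p_m^j)^{(\ell)}$ vanishes at both endpoints $0$ and $-1$ for $\ell < m$ (indeed $p_m^j$ has a factor $x^{\,\cdot}(1+x)^{\,\cdot}$ with both exponents $\ge m$ when $m$ is small, more precisely $p_m^0$ has a zero of order $m$ at $x=0$ and order $r+1$ at $x=-1$, and symmetrically for $p_m^1$). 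Since $p(\Delta)$ is a combination only of $p_m^j$ with $m \ge s+1$, the function $p(\Delta)$ together with its first $s$ derivatives vanishes at $x=0$ and $x=-1$. Hence integrating $\int_{-1}^0 p(\Delta)(x) e^{-\pi i k x}\,dx$ by parts $s+1$ times produces no boundary terms through order $s$, yielding a factor $(\pi k)^{-(s+1)}$ times $\int_{-1}^0 p(\Delta)^{(s+1)}(x) e^{-\pi i k x}\,dx$. One more integration by parts (the $(s+2)$-th) does pick up boundary terms in general, but they are bounded by $\|p(\Delta)^{(s+1)}\|_{\infty} \lesssim \|\Delta\|_{max}$, and it supplies the remaining factor $|k|^{-1}$; altogether one gets $|\widehat{(f_1-f_2)}(k)| \le C |k|^{-(2+s)} \|\Delta\|_{max}$, with $C$ depending only on $r$ (through $\sup$-norms of the fixed polynomials $p_m^j$ and their derivatives on $[-1,0]$).

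The only genuinely delicate point is bookkeeping the vanishing orders: one must verify cleanly that every $p_m^j$ entering $p(\Delta)$ has $(p_m^j)^{(\ell)}(0) = (p_m^j)^{(\ell)}(-1) = 0$ for all $\ell \le s$. For $\ell \le s < m$ this is immediate from the already-quoted derivative-matching identities $p(U)^{(\ell)}(0) = u_{0\ell}$, $p(U)^{(\ell)}(-1) = u_{1\ell}$ applied to the standard basis matrices — choosing $U$ to be the matrix with a single $1$ in position $(0,m)$ shows $(p_m^0)^{(\ell)}(0) = \delta_{\ell m} = 0$ for $\ell < m$ and $(p_m^0)^{(\ell)}(-1) = 0$ for all $\ell \le r$, and symmetrically for $p_m^1$ — so no direct manipulation of the explicit binomial sums is needed. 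After that, the integration-by-parts argument is routine, and the constant $C$ is manifestly independent of $f$, $F_1$, $F_2$, and $k$. I would present the $k\ne 0$ bound as the main case and note $k=0$ as the trivial one at the start.
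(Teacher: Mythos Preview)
Your proposal is correct and follows essentially the same route as the paper: write $f_1-f_2 = p(F_1-F_2)$ on $[-1,0]$, bound the $k=0$ coefficient trivially, and for $k\ne 0$ integrate by parts $s+2$ times, using that $p(\Delta)^{(\ell)}$ vanishes at both endpoints for $\ell\le s$ so that only the last step produces boundary terms. Your justification of the endpoint vanishing via the derivative-matching identities applied to standard basis matrices is a clean way to avoid manipulating the explicit binomial formulas; the paper simply writes out the post-IBP expression and leaves that verification implicit.
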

\begin{proof} If $s = r$, then \eqref{eq:disc_err1} and \eqref{eq:disc_err2} holds with equality as both sides are zero. For $s < r$, the result follows from
\begin{align*}
\widehat{(f_1-f_2)}(0) = \frac{1}{2}\int_{-1}^{0} \left( \sum_{m=0}^r (f_{0m}^1-f_{0m}^2) p_m^0(x) + \sum_{m=0}^r (f_{1m}^1-f^2_{1m}) p_m^1(x)\right)\,dx
\end{align*}
and, for $k \ne 0$, from
\begin{align*}
&\widehat{(f_1-f_2)}(k) = \frac{1}{2}\int_{-1}^0 p(F_1-F_2)(x)e^{-\pi i k x}\,dx = \\ &\frac{1}{2}\frac{1}{(-\pi i k)^{s+2}}\left( \sum_{m=0}^r (f_{0m}^1-f_{0m}^2) (p_m^0)^{(s+1)}(0) + \sum_{m=0}^r (f_{1m}^1-f_{1m}^2) (p_m^1)^{(s+1)}(0)\right) - \\ 
& \frac{1}{2}\frac{(-1)^k}{(-\pi i k)^{s+2}}\left( \sum_{m=0}^r (f_{0m}^1-f_{0m}^2) (p_m^0)^{(s+1)}(-1) + \sum_{m=0}^r (f^1_{1m}-f^2_{1m}) (p_m^1)^{(s+1)}(-1)\right) - \\
&\frac{1}{2}\frac{(-1)^k}{(-\pi i k)^{s+2}}\int_{-1}^{0} \left( \sum_{m=0}^r (f_{0m}^1-f_{0m}^2) (p_m^0)^{(s+2)}(x) + \sum_{m=0}^r (f_{1m}^1-f^2_{1m}) (p_m^1)^{(s+2)}(x)\right) e^{-\pi i k x}\,dx.
\end{align*}	
\end{proof}

The previous lemma thus implies that $\widehat{(u_{c,e}-u_{c,e}^q)}(k)$ decays as the rate $|k|^{-2}$. 
Finally, combining the estimates \cref{eq:err1} and \cref{eq:err2} with results in \Cref{lm:-BK} and \Cref{lm:-disc_err}, we arrive at the following upper bound for the numerical integration error using \cref{eq:na}.
\begin{theorem}\label{th: convergence} Let $g : \mathbb{R} \to \mathbb{R}$ be an even and absolutely integrable function such that $\beta(k) \le B |k|^{-(1+\gamma)}, k \ne 0,$ for some $B > 0$, $\gamma > -1$ and there exists a positive integer $n_q$ such that $\Vert U - U^q \Vert_{max} \le B n^{-q}$, for some $q > 0$ and all $n \ge n_0$. If $u \in C([0,1])$ be such that $|\widehat{u_{c,e}}(k) + \widehat{u_{c,e}}(-k)| \le B|k|^{-\delta}, k \ne 0$, for some $\delta > 1$ then, 
there exists a positive constant $C$ such that, for
the sequence of approximations $(A_nu), n \ge n_0$ as defined in \cref{eq:quad} to $Au$ given in \cref{eq:-Ie}, we have the error estimate
\begin{align*}
\Vert Au - A_nu \Vert_{\infty} &\le Cn^{-\min\{2+q+\gamma,2+q,\delta,\delta+\gamma\}}.
\end{align*}
In particular, if $u \in C^{0,\alpha}$ with $\alpha \in (\max\{0,-\gamma\},1]$, then
\[
\Vert Au - A_nu \Vert_{\infty} \to 0 \ \ \text{ as }\ \ n \to 0.
\]
\end{theorem}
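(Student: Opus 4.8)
The plan is to feed the decay estimates of \Cref{lm:-BK} and \Cref{lm:-disc_err} into the two bounds \cref{eq:err1} and \cref{eq:err2} derived above (which together majorize $\Vert Au-A_nu\Vert_\infty$ via the splitting \cref{eq:na}: \cref{eq:err1} controls the first and third terms, \cref{eq:err2} the combined contribution of the finite-difference errors, i.e.\ the remaining terms), estimate the resulting series, and collect powers of $n$. Everything is driven by
\[
S_n := \sum_{k=-n}^{n-1}|\beta(k)| \le |\beta(0)| + 2B\sum_{k=1}^{n}k^{-(1+\gamma)},
\]
finite because $g$ is absolutely integrable; comparison with an integral gives $S_n=O(1)$ when $\gamma>0$, $S_n=O(\log n)$ when $\gamma=0$, and $S_n=O(n^{-\gamma})$ when $-1<\gamma<0$, i.e.\ $S_n=O(n^{\max\{0,-\gamma\}})$ up to a logarithm in the one borderline case $\gamma=0$ (which for the power kernels is the trivial $g\equiv 1$ with $\beta\equiv 0$ off the origin, and for the logarithmic kernel costs only an inessential $\log n$ throughout).

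For the first contribution I would bound the inner $\ell$-series in \cref{eq:err1} uniformly in $k$: since $|k\pm 2\ell n|\ge(2\ell-1)n$ for $-n\le k\le n-1$, $\ell\ge1$, the hypothesis $|\widehat{u_{c,e}}(m)+\widehat{u_{c,e}}(-m)|\le B|m|^{-\delta}$ gives $\sum_{\ell\ge1}|\widehat{u_{c,e}}(k+2\ell n)+\widehat{u_{c,e}}(-k-2\ell n)|\le CBn^{-\delta}\sum_{\ell\ge1}(2\ell-1)^{-\delta}$, convergent because $\delta>1$; multiplying by $S_n$ yields $O(n^{\max\{0,-\gamma\}-\delta})=O(n^{-\min\{\delta,\delta+\gamma\}})$, while the truncation tail is $\le CB^2\sum_{|k|\ge n}|k|^{-(1+\gamma+\delta)}=O(n^{-(\gamma+\delta)})\le O(n^{-\min\{\delta,\delta+\gamma\}})$ as $1+\gamma+\delta>1$. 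For the second contribution, \Cref{lm:-disc_err} applies with $s=0$ (the matrices $U$ and $U^q$ agree only in their zeroth column), so $|\widehat{(u_{c,e}-u_{c,e}^q)}(k)|\le C|k|^{-2}\Vert U-U^q\Vert_{\max}\le CBn^{-q}|k|^{-2}$ for $k\ne0$; running the same two estimates through \cref{eq:err2}, now with $\delta$ effectively replaced by $2$ and an overall factor $n^{-q}$, gives $O(n^{\max\{0,-\gamma\}-2-q})+O(n^{-(2+q+\gamma)})=O(n^{-\min\{2+q,2+q+\gamma\}})$ (the tail series converges since $3+\gamma>2>1$). Adding the two contributions through \cref{eq:na} produces the claimed bound $Cn^{-\min\{2+q+\gamma,2+q,\delta,\delta+\gamma\}}$.

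For the ``in particular'' assertion the displayed estimate cannot be applied directly: a merely $\alpha$-H\"older $u$ yields only $|\widehat{u_{c,e}}(k)|\le C|k|^{-\alpha}$ with $\alpha\le1$, so no admissible $\delta>1$ exists, and $u^{(m)}(0),u^{(m)}(1)$ for $m\ge1$ need not even be defined. The remedy I would use is to run the scheme with the continuation parameter $r=0$, so that $p(U)$ is the linear interpolant through $(0,u(0))$ and $(-1,u(1))$; then $U=U^q$, the differences $C_L(U-U^q),C_R(U-U^q)$ and the whole second contribution of \cref{eq:na} vanish, and the error collapses to $\sum_{k=-n}^{n-1}\beta(k)(\widehat{u_{c,e}}(k)-\hat u_{c,n}(k))e^{\pi ikx}+\sum_{|k|\ge n}\beta(k)\widehat{u_{c,e}}(k)e^{\pi ikx}$, whence $\Vert Au-A_nu\Vert_\infty\le S_n\,\Vert u_{c,e}-u_{c,n}\Vert_\infty + CB\sum_{|k|\ge n}|k|^{-(1+\gamma+\alpha)}$. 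The second term tends to $0$ precisely because $\alpha>\max\{0,-\gamma\}$ forces $1+\gamma+\alpha>1$. For the first term I would avoid any coefficientwise estimate of the aliasing error (which diverges under H\"older decay alone) and instead note that for $|k|\le n-1$, $\widehat{u_{c,e}}(k)-\hat u_{c,n}(k)=\widehat{(u_{c,e}-u_{c,n})}(k)$ with $u_{c,n}$ the trigonometric interpolant of $u_{c,e}$ on the $2n$ equispaced nodes, so $|\widehat{u_{c,e}}(k)-\hat u_{c,n}(k)|\le\Vert u_{c,e}-u_{c,n}\Vert_\infty$; since $u\in C^{0,\alpha}([0,1])$ makes $u_{c,e}\in C^{0,\alpha}(\mathbb{R}/2\mathbb{Z})$ (the linear piece is Lipschitz and the pieces match at the junctions), Jackson's theorem together with the $O(\log n)$ Lebesgue constant of equispaced trigonometric interpolation gives $\Vert u_{c,e}-u_{c,n}\Vert_\infty=O(n^{-\alpha}\log n)$, and multiplying by $S_n=O(n^{\max\{0,-\gamma\}}\log n)$ and invoking $\alpha+\gamma>0$ shows this term $\to 0$ too.

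The series arithmetic in the first half is routine bookkeeping of $S_n$ across the three sign regimes of $\gamma$; the step that needs genuine care is the ``in particular'', where the essential observations are that the coefficientwise bound \cref{eq:err1} is too lossy for non-smooth $u$ and must be traded for the uniform interpolation-error estimate, and that one is forced to take $r=0$ so the finite-difference errors disappear outright. I expect that to be the main obstacle.
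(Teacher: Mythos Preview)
Your argument for the main estimate is correct and follows exactly the route the paper indicates: the paper states the theorem as a direct consequence of feeding the decay of $\beta(k)$ (\Cref{lm:-BK}) and of $\widehat{(u_{c,e}-u_{c,e}^q)}(k)$ (\Cref{lm:-disc_err}) into the two bounds \cref{eq:err1} and \cref{eq:err2}, without spelling out the series arithmetic. Your bookkeeping with $S_n$ across the three $\gamma$-regimes is precisely how those details would be filled in, and the exponents you obtain match.

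Where your proposal goes beyond the paper is the ``in particular'' clause. The paper offers no separate argument for it, and you are right that the main estimate cannot be invoked: a $C^{0,\alpha}$ density with $\alpha\le 1$ yields only $\delta=\alpha$, so the hypothesis $\delta>1$ fails and, worse, the inner $\ell$-series in \cref{eq:err1} diverges. Your remedy---forcing $r=0$ so that $U=U^q$ and the finite-difference contribution disappears, then replacing the coefficientwise aliasing bound by the uniform trigonometric interpolation estimate $\Vert u_{c,e}-u_{c,n}\Vert_\infty=O(n^{-\alpha}\log n)$ via Jackson plus the $O(\log n)$ Lebesgue constant---is a genuinely different device, and it closes a gap the paper leaves open. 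The only point to make explicit in a final write-up is that the choice $r=0$ is not optional but forced: for $r\ge 1$ the matrix $U$ involves $u'(0),u'(1)$, which a merely H\"older $u$ need not possess, so the scheme $A_n$ is only defined for $r=0$ in this regime.
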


We recall that if $u\in C^\infty([0,1])$, then $u_{c,e}\in C^{r,1}([0,1])$ and is piecewise smooth. In this case, for $k \ne 0$, we have
\[
|\widehat{u_{c,e}}(k) + \widehat{u_{c,e}}(-k)| \le \begin{cases}
B|k|^{-(2+r)},  & \text{if $r$ is even}, \\
 B|k|^{-(3+r)},  & \text{if $r$ is odd}.
\end{cases}
\]
Therefore, the error estimate in \Cref{th: convergence} reduces to 
\begin{align*}
\Vert Au - A_nu \Vert_{\infty} &\le \begin{cases}
Cn^{-\min\{2+q+\gamma,2+q,2+r,2+r+\gamma\}},  & \text{if $r$ is even}, \\
 Cn^{-\min\{2+q+\gamma,2+q,3+r,3+r+\gamma\}},  & \text{if $r$ is odd}.
\end{cases}
\end{align*} 
In particular, we note that when $\gamma < 0$, we have
\[
\Vert Au - A_nu \Vert_{\infty} \le \begin{cases}
Cn^{-(2+\gamma+\min\{q,r\})},  & \text{if $r$ is even}, \\
Cn^{-(2+\gamma+\min\{q,1+r\})},  & \text{if $r$ is odd}.
\end{cases}
\]
Whereas, for $\gamma \ge 0$, we have
\[
\Vert Au - A_nu \Vert_{\infty} \le \begin{cases}
Cn^{-(2+\min\{q,r\})},  & \text{if $r$ is even}, \\
Cn^{-(2+\min\{q,1+r\})},  & \text{if $r$ is odd}.
\end{cases}
\]

 \section{Numerical computation of $\beta(k)$}\label{sec:-Bk}

The primary difficulty in the numerical evaluation of $\beta(k)$ is due to the kernel singularity at $r=0$. To overcome this challenge,
we use a change of variable of the form $r=\tau^{M}$ with an odd $M$.
The transformed integral then reads
\begin{align*}
\beta(k) = M \int_{-1}^{1}\tau^{M-1} g(\tau^M)) e^{\pi ik\varrho(\tau^M)} \,d\tau .
\end{align*}
We note that the Jacobian, $\tau^{M-1} $ of the transformation renders the integrand $(M-1)$ times continuously differentiable in $\tau$
if $g(x) = \log(|x|)$, whereas for $g(r) = |r|^{\gamma}$, the integrand is $M'$ times continuously differentiable if $\lfloor(1+\gamma)(M+1)\rfloor-1 \in [M',M'+1)$ for an $M' \in \mathbb{N}$. Thus, the integral defining $\beta(k)$
can now be approximated to a high-order accuracy using an appropriate high-order quadrature. For example, in the calculations that we present in this text, we have employed the
Clenshaw Curtis quadrature. For completeness, we recall that the corresponding quadrature points $\{x^{cc}_j: j = 0,\ldots,n_{cc}\}$ and weights $\{\omega^{cc}_j: j = 0,\ldots,n_{cc}\}$
over $[-1,1]$ is given by 
\[
x^{cc}_j = \cos\left(\frac{(2j+1)\pi}{2n_{cc}}\right),\ \ \  j = 0,\ldots, n-1  
\]
and 
\begin{align*}
\omega^{cc}_j = \frac{2}{n_{cc}} \sideset{}{'}\sum_{k=0}^{n_{cc}} \vartheta_k \cos\left(\frac{(2j+1)k \pi}{2n_{cc}}\right),
\end{align*}
where 
\begin{align*}
\vartheta_k = 
\begin{cases}
\frac{-2}{k^2-1}, & \text{if k is even}, \\
0 , & \text{if k is odd}. 
\end{cases}
\end{align*}

%

We demonstrate the effectiveness of this numerical approach through a sequence of computational examples where we compute $\beta(k)$ covering a range of $k$ values. In particular, in Tables \ref{table:-betacompR_16}, \ref{table:-betacompR_256} 
and \ref{table:-betacomp_1024}, we present a convergence study for the numerical computation of $\beta(k)$, where $k$ takes values $16$, $256$ and  $1024$, respectively. We observe from these tables that our numerical scheme indeed computes $\beta(k)$
with high-order accuracy. As expected, to maintain a desired level in the 
accuracy across all frequencies, we need to suitably increase $n_{cc}$ with increasing values of $k$.

\begin{table}[t]
	\begin{center}
		\begin{tabular}{c|c|c} \hline
			$n_{cc}$ & $\beta(16)$  & Error   \\
			\hline
			$2^{5}$  & $- 0.41424931329196118-0.3475645408951408i$ & $4.5342\times10^{-1}$ \\ 
			\hline
			$2^{6}$  & $- 0.06045174828858752-0.0146128696936386i$ & $5.2144\times10^{-2}$  \\
			\hline
			$2^{7}$  & $- 0.11231288276991235-0.0091768451503311i$ & $3.531\times10^{-7}$ \\ 
			\hline
			$2^{8}$  & $- 0.11231323588089342-0.0091771070920223i$ & $8.9342\times10^{-13}$ \\ 
			\hline
		\end{tabular}
		\caption{Numerically computed values of $\beta(16)$ for various Chenshaw Curtis quadrature grids of size $n_{cc}$ when $g(x) = \log(|x|)$. The corresponding ``exact value'', for comparison, is  $-0.11231323588086038-0.00917710709201628i$ which is obtained to 16 digits of accuracy using Mathematica.} \label{table:-betacompR_16}
	\end{center}
\end{table}

\begin{table}[b]
	\begin{center}
		\begin{tabular}{c|c|c} \hline
			$n_{cc}$ &  $\beta(256)$  & Error   \\
			\hline
			$2^{9}$  & $- 0.1032298166113593 - 0.043479426501543391i$& $1.0534\times10^{-1}$ \\ 
			\hline
			$2^{10}$  & $- 0.00698157703241292487-0.000641865536413126i$ & $1.2973\times10^{-8}$ \\ 
			\hline
			$2^{11}$ & $- 0.0069815658535057186-0.000641872133701744i$ & $8.9492\times10^{-13}$ \\ 
			\hline
		\end{tabular}
		\caption{Numerically computed values of $\beta(256)$ for various Chenshaw Curtis quadrature grids of size $n_{cc}$ when $g(x) = \log(|x|)$. The corresponding ``exact value'', for comparison, is  $-0.006981565853453448-0.000641872133719719i$ as obtained to 16 digits of accuracy using Mathematica.} \label{table:-betacompR_256}
	\end{center}
\end{table}
\begin{table}[t]
	\begin{center}
		\begin{tabular}{c|c|c} \hline
			$n_{cc}$ & $\beta(1024)$  & Error   \\
			\hline
			$2^{10}$  & $- 0.024051290624620809-0.0849763910282477i$ & $8.7692\times10^{-2}$ \\ 
			\hline
			$2^{11}$  & $- 0.010792170763582126-0.0211729731252266i$ & $2.2865\times10^{-2}$  \\
			\hline
			$2^{12}$  & $- 0.001744903243373586-0.0001613157849477i$ & $9.7419\times10^{-11}$ \\ 
			\hline
			$2^{13}$  & $- 0.001744903278200884-0.0001613158759463i$ & $1.2662\times10^{-13}$\\ 
			\hline
		\end{tabular}
		\caption{Numerically computed values of $\beta(1024)$ for various Chenshaw Curtis quadrature grids of size $n_{cc}$ when $g(x) = \log(|x|)$. The corresponding ``exact value'', for comparison, is  $-0.00174490327807993-0.00016131587598406i$ which is obtained to 16 digits of accuracy using Mathematica.} \label{table:-betacomp_1024}
	\end{center}
\end{table}

\begin{remark}  
	The values for $\beta(k)$ are computed apriori with high accuracy and stored for their use in schemes given in \cref{eq:conv}, \cref{eq:quad} and thus do not add significant online computational burden at the numerical integration stage. Nevertheless, upto moderately large values of $k$, they can be obtained using an FFT based fast scheme in a reduced computational time. For example, if $g(r) = |r|^{\gamma}$, we have
	\begin{align*}
	\beta(k) &=  \int_{-1}^{1}|\rho|^{\gamma} e^{\pi ik\rho} d\rho
	=  2\int_{0}^{1}\rho^{\gamma} \cos(\pi k \rho) d\rho,
	\end{align*}
	which, after, $2M$ repeated integrating by parts, reads
	\begin{align} \label{eq:fast_beta}
	\beta(k) = 2 (-1)^k \sum_{\ell=1}^M (-1)^{\ell-1} 
	\frac{(\pi k)^{2(\ell-1)}}{\prod\limits_{j=1}^{2\ell-1}(j+\gamma)}  + 
	\frac{(-1)^M (\pi k)^{2M}}{\prod\limits_{j=1}^{2M}(j+\gamma)} \int_{-1}^{1}|\rho|^{2M+\gamma} e^{\pi ik\rho} d\rho.
	\end{align}
	An application of $n_{cc}$-point trapezoidal rule with endpoint corrections for the numerical integration in \cref{eq:fast_beta} yields
	
	\begin{align} \label{eq:fast_beta_1}
	\beta(k) &= 2 (-1)^k \sum_{\ell=1}^M (-1)^{\ell-1} 
	\frac{(\pi k)^{2(\ell-1)}}{\prod\limits_{j=1}^{2\ell-1}(j+\gamma)}  + \sum_{m=1}^{M-1} \frac{B_{2m}}{(2m!)} \frac{\psi^{(2m-1)}(-1)-\psi^{(2m-1)}(1)}{n_{cc}^{2m}} \\ &+ 
	\frac{(-1)^M (\pi k)^{2M}}{\prod\limits_{j=1}^{2M}(j+\gamma)} 
	\frac{1}{n_{cc}}  \sum_{m=0}^{n_{cc}-1} \psi(\rho_m), \nonumber
	\end{align}
	where $\psi(x) = |x|^{2p+\gamma} e^{\pi ikx}$, $x \in [-1,1]$, $\rho_m = -1 + 2m/n_{cc}$, $m= 1 \ldots n_{cc}-1$ and $B_m$ are Bernoulli numbers. Note that the summation in the last term of \cref{eq:fast_beta_1} can be evaluated for all $|k| < n_{cc}/2$ in $O(n_{cc} \log n_{cc})$ operations using the FFT while evaluation of first and second correction terms in \cref{eq:fast_beta_1} require $O(n_{cc})$ operations in total. 
	
\end{remark}

\section{Compactly supported integrands}\label{sec:-csi}
Many important instances of integral operators come with density functions whose support is contained within the domain of integration. A prominent source of such examples is the integral equation formulations of penetrable scattering problem where the material inhomogeneity is localized in a region bounded by a homogeneous surrounding. The proposed integration scheme, when applied to integrals with  smooth and compactly supported integrands, simplifies even further. For instance, the extension of $u$ from $[0,1]$ to $[-1,1]$ is trivial with the zero polynomial doing the job. Consequently, no correction is required as both $C_L(U)(x)$ and $C_R(U)(x)$ identically take the zero value. The numerical scheme in this case, therefore, reads
\[
(A_nu)(x) = \sum_{j=0}^{n-1} w_j^n(x) u_{j},
\]
%
  where
  \[
  w_j^n(x) = \frac{1}{2n} \sum_{k=-n}^{n-1} \beta(k)e^{\pi ik\left(x-j/n\right)}\ \ \ \text{ with }\ \ \ 
  \beta(k) =  \int_{-1}^{1} g(r) e^{\pi ik\rho} d\rho.
  \]
  The form that corresponds to \cref{eq:conv} simply reads
  \[
  (A_nu)(x) = \sum_{k=-n}^{n-1} \beta(k)\hat{u}_{c,n}^q(k)
  \]
  and can be implemented in a straightforward manner using FFT and its inverse using the pre-computed $\beta(k)$ values. For $u \in C^{\infty}([0,1])$, the rate of convergence depends directly on order with which $u$ vanishes at the boundary. A couple of numerical experiments (see \cref{ex:compact_supp}) to exemplify this has been included in the next section.
  

\section{Numerical Results} \label{sec:-ne}
\label{sec:Numerical Results}
In this section, we demonstrate high-order 
convergence and accuracy of our quadrature through several computational 
examples. 
We compute the numerical order of convergence of our algorithm which relates to the rate at which the error in the approximation of integral decreases as the discretization scale decreases according to
the following formula:
\begin{equation} 
 \text{Order} = \log_{2}\left(\frac{\max_{0\le x_i\le1}|A(x_i)-A_n(x_i)|}{\max_{0\le x_i\le1}|A(x_i)-A_{2n}(x_i)|}\right).
\end{equation}
The relative error reported in this section are computed as
\begin{equation} 
\varepsilon_{\infty} =   \frac{\max_{0\le x_i\le1}|A_{exact}(x_i) - A_n(x_i)|}{\max_{0\le x_i\le1}|A_{exact}(x_i)|} .
\end{equation}
To corroborate the decay rate, we plot $\log_{10}(\varepsilon_{\infty})$ against $\log_{10}(h),\, h=1/n$, where the corresponding exact value of the integral is obtained to 16 digits of accuracy using Mathematica whenever it can not be obtained analytically. 

\begin{example}
In the first example, we consider the integral operator $A$, defined in \eqref{eq:-Ie}, with singular function $g(x) = |x|^\gamma$ and $u(x)=x$. In this case, $(Au)(x)$ can be obtained analytically and  is given by
\begin{align}\label{eq:exact}
(Au)(x)= \frac{x^{2+\gamma}}{2+3\gamma+\gamma^2} + \frac{(1-\gamma)^{1+\gamma}(1+\gamma+x)}{(1+\gamma)(2+\gamma)}.
\end{align}

We compare the numerically computed integrals with \eqref{eq:exact} when $\gamma=-4/5$. The results have been shown in \Cref{fig:-error2}.
 Note that as $u(x)$ is a linear function, $U^q = U$ for $q\ge1$. Therefore, there is no effect of the parameter $q$ on the convergence as $q$ increases. To  corroborate this, we fix the parameter $q$ at 1 and 3, and vary the smoothness parameter $r$ from 1 to 4 and show the corresponding results in \Cref{fig:-error2}a and \Cref{fig:-error2}b respectively. These plots clearly demonstrate the effect of $r$ on the convergence rate.
\end{example}

\begin{figure}[t]
	\centering
	\begin{subfigure}[$q=1$]
		{\includegraphics[width=0.45\textwidth,trim={1.5cm 0.7cm 1.4cm 0.8cm},clip]{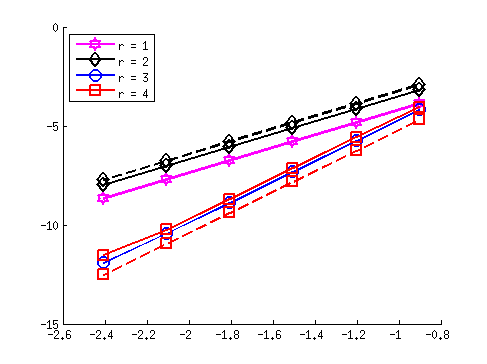}}
	\end{subfigure}
	~
	\begin{subfigure}[$q=3$]
		{\includegraphics[width=0.45\textwidth,trim={1.5cm 0.7cm 1.4cm 0.8cm},clip]{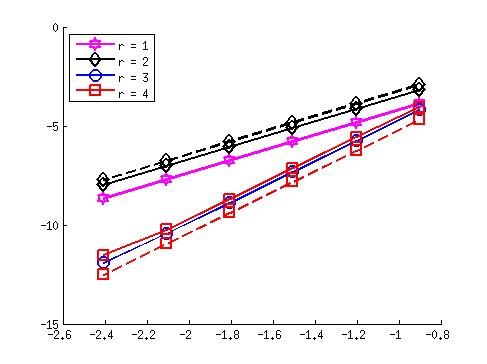}}
	\end{subfigure}

	\caption{Plots showing $\log_{10}(\varepsilon_{\infty})$ against $\log_{10}(h),\, h=1/n$ for the kernel $g(x) = |x|^{-4/5}$ and $u(x)=x.$ In each subfigure, dashed line with slopes as predicted by \Cref{th: convergence} are shown for reference.  
	}
	\label{fig:-error2}
\end{figure}

\begin{example}
In the next example, we present the numerical results to demonstrate the  effect of smoothness parameter $r$ and order of the boundary derivative approximation $q$ on the convergence of our numerical scheme. 
 We consider the integral operator $A$, defined in \eqref{eq:-Ie}, with singular function $g(x) = |x|^\gamma,\, \gamma = -4/5$ and $u(x)=\cos(x)$. In \Cref{fig:-error1}a and \Cref{fig:-error1}b, we fix the smoothness parameter at $r=2$ and $r=3$ respectively and vary the parameter $q$ from 1 to 4 in both cases. It can be seen in \Cref{fig:-error1}a and \Cref{fig:-error1}b that the integration scheme performs well and the expected order  of convergence is achieved in each case. Subsequently, in \Cref{fig:-error1}c and \Cref{fig:-error1}d, we fix the parameter $q$ and vary the smoothness parameter $r$ to observe its effect on the convergence rate. The plots clearly show that the numerical errors converge as expected.
\end{example}

\begin{figure}[t]
	\centering
	\begin{subfigure}[$r=2$]
		{\includegraphics[width=0.45\textwidth,trim={1.2cm 0.7cm 1.4cm 0.8cm},clip]{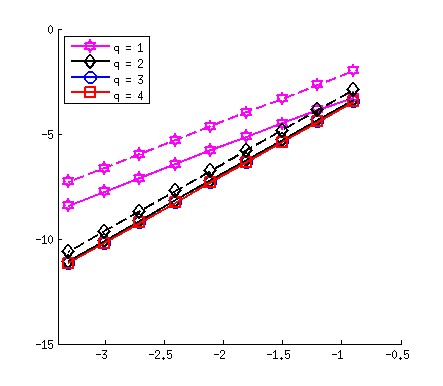}}
	\end{subfigure}\label{fig:-error1a}
	~
	\begin{subfigure}[$r=3$]
		{\includegraphics[width=0.45\textwidth,trim={1.2cm 0.7cm 1.4cm 0.8cm},clip]{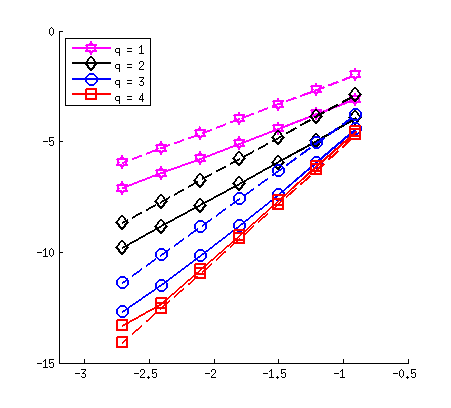}}
	\end{subfigure}

	~
	\begin{subfigure}[$q=2$]
		{\includegraphics[width=0.45\textwidth,trim={1.2cm 0.7cm 1.4cm 0.8cm},clip]{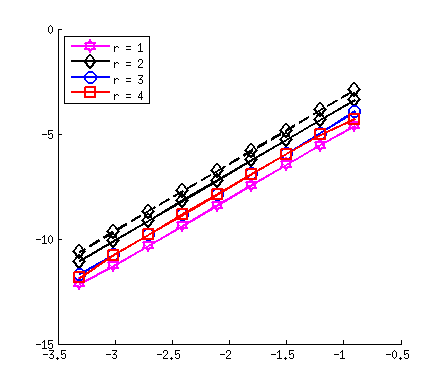}}
	\end{subfigure}
	~
	\begin{subfigure}[$q=3$]
		{\includegraphics[width=0.45\textwidth,trim={1.2cm 0.7cm 1.4cm 0.8cm},clip]{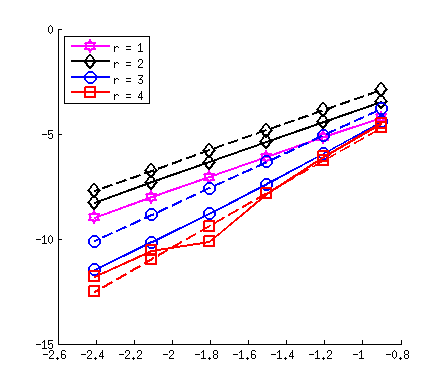}}
	\end{subfigure}

	\caption{Plots showing $\log_{10}(\varepsilon_{\infty})$ against $\log_{10}(h),\, h=1/n$ for  $g(x) = |x|^{-4/5}$ and   $u(x)=\cos(x);$ dashed lines with slopes as predicted by \Cref{th: convergence} are shown for reference.   
	}
	\label{fig:-error1}
\end{figure}

\begin{example}
Next, we test our numerical integration scheme for some other kernel, for example $g(x) = \log(|x|)$ and $g(x) = |x|^\gamma,\, \gamma=1/2$. 
 To show the effect of parameter $r$ over the convergence of the integration scheme, we consider $u(x)=x$ and fix $q$ at 1 and vary the smoothness parameter $r$. It can be seen in \Cref{fig:-error3}a and \Cref{fig:-error4}a that the integration scheme performs well and the expected order of convergence is achieved in each case. Next, in \Cref{fig:-error3}b and \Cref{fig:-error4}b, we consider $u(x)=\cos(x)$ and fix $r$ at 3 and vary $q$ to observe its effect on the convergence rate. These plots clearly demonstrate the effect of $q$ on the convergence rate. 
\end{example}

\begin{figure}[t]
	\centering
	\begin{subfigure}[$q=1,\,u(x)=x$]
		{\includegraphics[width=0.45\textwidth,trim={1.5cm 0.8cm 1.4cm 0.8cm},clip]{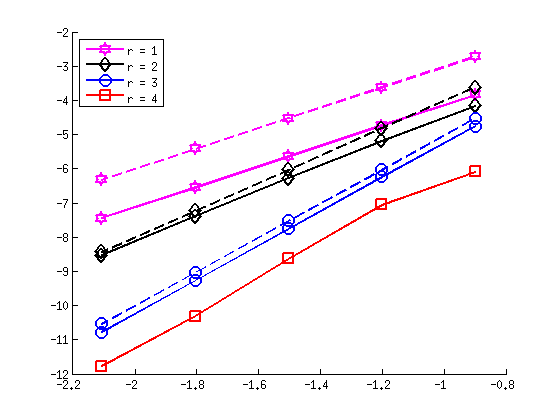}}
	\end{subfigure}
	~
	\begin{subfigure}[$r=3,\, u(x)=\cos(x)$]
		{\includegraphics[width=0.45\textwidth,trim={1.5cm 0.8cm 1.4cm 0.8cm},clip]{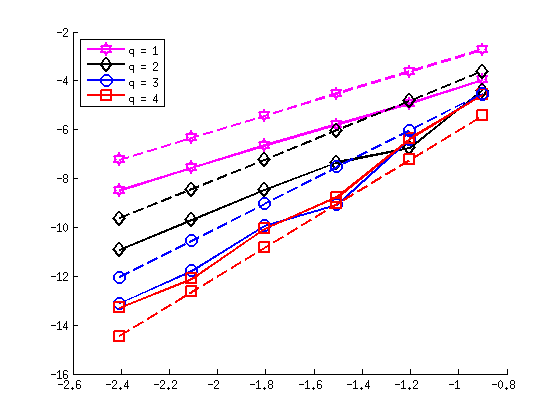}}
	\end{subfigure}

	\caption{Plots showing $\log_{10}(\varepsilon_{\infty})$ against $\log_{10}(h),\, h=1/n$ for the kernel $g(x) = \log(|x|)$; dashed lines with expected slopes are shown for reference. 
	}
	\label{fig:-error3}
\end{figure}

\begin{figure}[t]
	\centering
	\begin{subfigure}[$q=1,\, u(x)=x$]
		{\includegraphics[width=0.45\textwidth,trim={1.5cm 0.8cm 1.4cm 0.8cm},clip]{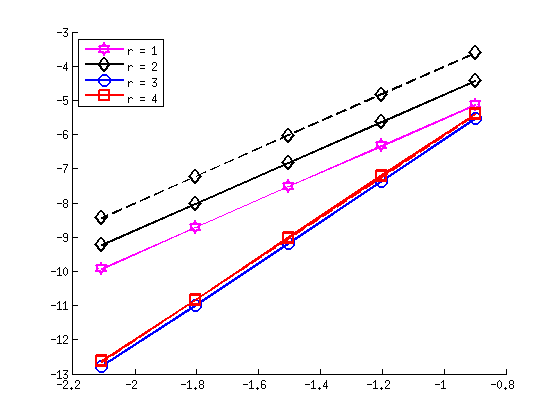}}
	\end{subfigure}
	~
	\begin{subfigure}[$r=3,\, u(x)=\cos(x)$]
		{\includegraphics[width=0.45\textwidth,trim={1.5cm 0.8cm 1.4cm 0.8cm},clip]{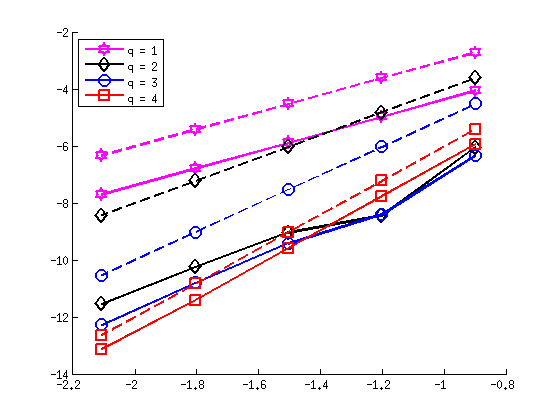}}
	\end{subfigure}

	\caption{
	Plots showing $\log_{10}(\varepsilon_{\infty})$ against $\log_{10}(h),\, h=1/n$ for the kernels $g(x) = |x|^\gamma,\, \gamma=1/2$; dashed lines with expected slopes are shown for reference. 
	}
	\label{fig:-error4}
\end{figure}

\begin{example}
	\label{ex:compact_supp}
In the last example, we demonstrate the performance of our numerical integration scheme, for functions whose support is contained in $[0,1]$. Toward this, we first consider $u =e^{-(\frac{x-1/2}{\sigma})^2}$ with $\sigma=0.01$. As $u$ vanishes to high order at boundary points $x = 0$ and $x = 1$, the numerical scheme exhibits a super algebraic convergence as seen in \Cref{fig:-error5}. We note that this is in contrast with the methodology used in \cite{Marin} where expected rate of convergence is algebraic for similarly compactly supported integrands. Finally, to demonstrate the effect of smoothness parameter $r$ on the rate of convergence,  we choose a smooth compactly supported function $u = x^3(1-x)^3$ that satisfies $u^{(\ell)}(0)= 0 =u^{(\ell)}(1)$ for $\ell<3$. The corresponding convergence study is shown in \Cref{fig:-error6}.  
\end{example}

\begin{figure}[t]
	\centering
	\begin{subfigure}[$q=1 $]
		{\includegraphics[width=0.45\textwidth,trim={1.5cm 0.8cm 1.4cm 0.8cm},clip]{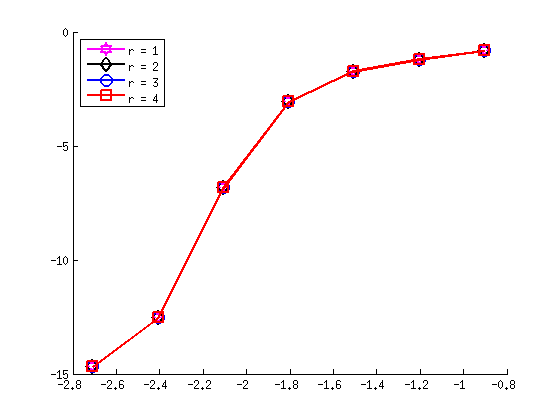}}
	\end{subfigure}
	~
	\begin{subfigure}[$r=3$]
		{\includegraphics[width=0.45\textwidth,trim={1.5cm 0.8cm 1.4cm 0.8cm},clip]{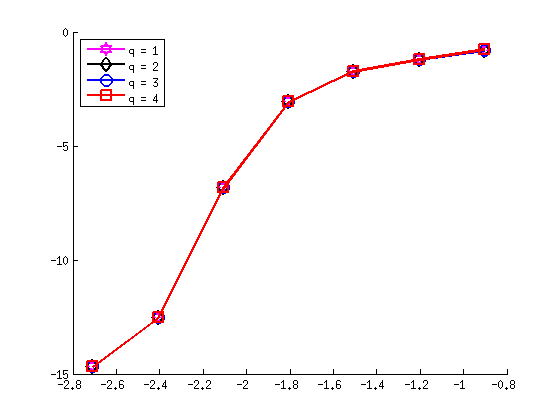}}
	\end{subfigure}

	\caption{Plots showing $\log_{10}(\varepsilon_{\infty})$ against $\log_{10}(h),\, h=1/n$ for the kernels $g(x) = |x|^\gamma,\, \gamma=-1/2$ and $u(x)=e^{-(\frac{x-0.5}{\sigma})^2}$ with $\sigma = 0.01$. 
	}
	\label{fig:-error5}
\end{figure}

\begin{figure}[t]
	\centering
	\begin{subfigure}[$q=1$ ]
		{\includegraphics[width=0.45\textwidth,trim={1.5cm 0.8cm 1.4cm 0.8cm},clip]{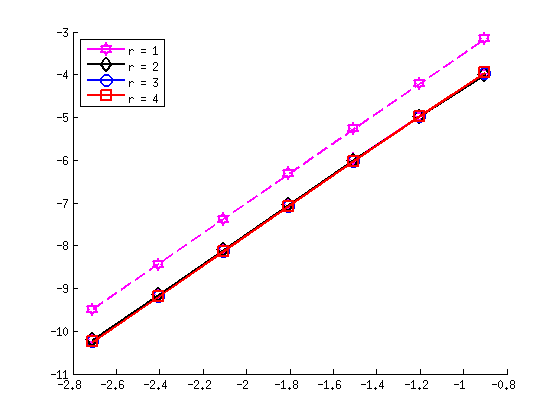}}
	\end{subfigure}
	~
	\begin{subfigure}[$r=3$ ]
		{\includegraphics[width=0.45\textwidth,trim={1.5cm 0.8cm 1.4cm 0.8cm},clip]{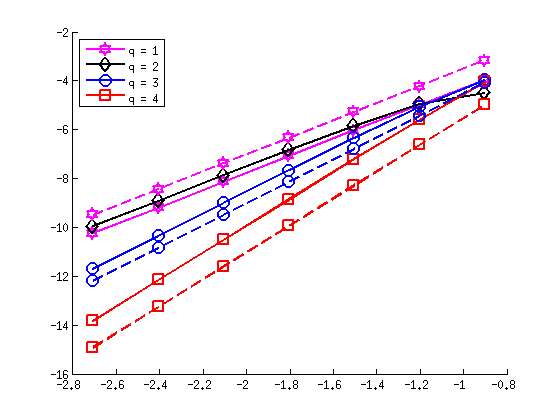}}
	\end{subfigure}

	\caption{Plots showing $\log_{10}(\varepsilon_{\infty})$ against $\log_{10}(h),\, h=1/n$ for the kernels $g(x) = |x|^{-1/2}$ and $u(x)= (x(1-x))^3 $. In each subfigure, dashed line with the slope as predicted by \Cref{th: convergence} is shown for reference.  
	}
	\label{fig:-error6}
\end{figure}

\section{summary and conclusions}  
In this paper, we have proposed a simple, efficient and high-order numerical integration scheme for the evaluation of the integral of the form
\begin{equation*}
(Au)(x) = \displaystyle \int_{0}^{1}g(x-y)u(y)dy
\end{equation*}
with a weekly singular kernel $g$, either of the form $g(x) = |x|^{\gamma}, \gamma \in (-1,\infty)$ or of the form $g(x) = \log |x|$ utilizing the Fourier extension of $u(x)$. The preeminent motivation for this Fourier extension based integration scheme is to compute integral operator $A$ with high-order accuracy in $O(n\log n)$ operations. 
This text contains a full error analysis of the numerical integration scheme as well as a range of numerical experiments. The results show a clear agreement between the thoretical rates and the observed ones. As a special case, we also discuss our numerical integration scheme when the integrand has its support contained in the domain of integration and observe that the scheme simplifies even further from the implementation point of view while maintaining the computational accuracy and efficiency.

\end{document}